\documentclass[10pt]{article}
\usepackage[margin=1.25in]{geometry}

\usepackage{amsmath, amsthm}
\numberwithin{equation}{section}

\textheight 20cm
\usepackage[english]{babel}
\usepackage[T1]{fontenc}
\usepackage[utf8]{inputenc}
\usepackage{units}

\usepackage{amsmath}
\usepackage{afterpage}

\usepackage{graphicx}
\usepackage{subfigure}
\usepackage{wrapfig}
\usepackage{indentfirst}
\usepackage{latexsym}
\usepackage{sidecap}
\usepackage{booktabs}
\usepackage{amsthm}
\usepackage{amssymb}

\usepackage{setspace}

\usepackage{mathrsfs}
\usepackage{textcomp}
\usepackage{braket}
\usepackage{bbm}
\usepackage{colortbl}
\usepackage{comment}
\usepackage[colorinlistoftodos]{todonotes}

\newcommand{\E}[1]{ \mathbb{E} \left [ #1\right ] }

\newcommand{\RR}{\mathbb{R}}
\newcommand{\NN}{\mathbb{N}}

\newcommand{\Ind}[1]{\mathbbm{1}_{#1}}

\newcommand{\OO}{\mathcal{O}}

\newcommand{\FT}{\left (\mathcal{F}_t\right )_{t \geq 0}-}
\newcommand{\U}{\mathcal{U}}

%\showkeys \la

\usepackage{fancyhdr}
\usepackage{calc} 
\pagestyle{fancy}

\fancyhf{} 
\fancyhead[LE,RO]{\bfseries\thepage} 
\fancyhead[LO]{\nouppercase{\rightmark}}
\fancyhead[RE]{\nouppercase{\leftmark}} 
\oddsidemargin=20pt
\evensidemargin=16pt

 \newtheorem{theorem}{theorem}[section]

 \theoremstyle{definition}
 \newtheorem{definition}[theorem]{Definition}
 \theoremstyle{remark}

 \numberwithin{equation}{section}

\usepackage{comment}
\usepackage[colorinlistoftodos]{todonotes}

\begin{document}

\title{Optimal control for the stochastic FitzHugh-Nagumo model with recovery variable}
\author{Francesco Cordoni$^{a}$ \and Luca Di Persio$^{a}$}
\date{}
\maketitle

\renewcommand{\thefootnote}{\fnsymbol{footnote}}
\footnotetext{{\scriptsize $^{a}$ Department of Computer Science, University of Verona, Strada le Grazie, 15, Verona, 37134, Italy}}
\footnotetext{{\scriptsize E-mail addresses: francescogiuseppe.cordoni@univr.it
(Francesco Cordoni), luca.dipersio@univr.it (Luca Di Persio)}}

\begin{abstract}
In the present paper we derive the existence and uniqueness of a solution for the optimal control problem determined by a stochastic FitzHugh-Nagumo equation with recovery variable. In particular due the cubic non-linearity in the drift coefficients, standard techniques cannot be applied so that the Ekeland's variational principle has to be exploited.
\end{abstract}

\textbf{AMS Classification subjects:} 34K35, 35R60, 49J53, 49K99, 60H15, 65K10, 93E03. \medskip

\textbf{Keywords or phrases: }Stochastic FitzHugh-Nagumo equation, stochastic optimal control, Ekeland's variational principle, stochastic partial differential equations.

\maketitle

\section{Introduction}

The mathematical formulation of the signal propagation in a neural cell has been firstly introduced by A. L. Hodgkin, and A. F. Huxley in \cite{HH}, where  the authors proposed a  mathematical model based on a system of four non-linear, coupled differential equations describing  how action potentials in neurons are initiated and propagated. In particular, latter system describes the evolution in time of four state variables and even if it is possible to state some qualitative properties for it, an analytical solution is missing. Therefore,  alternative approaches have been developed by several authors, as in the case of the celebrated FitzHugh-Nagumo model (FHN), see  \cite{FH,Nag}, where the system is reduced to two equations describing the evolution in time of the (neuronal) voltage variable and of the so called {\it recovery variable}. It is worth to mention that the previous description, as noted by the FitzHugh in his seminal paper, is an example of relaxation oscillator, in fact, FitzHugh referred to his model as the Bonhoeffer–Van der Pol oscillator. 
During recent years, the FHN model has gained a lot of attention, particularly from the point of view of the stochastic analysis in order to consider the influence of random perturbations of  the original, deterministic description, see, e.g. \cite{BMZ,MaDPZ}
In fact, from the experimental point of view, many neuronal activities can be better understood allowing for random components which  affect  the transmission of signals, as well as the inaccuracy of laboratory measures and the lack of a complete knowledge of the particular cerebral activity under study.
Aiming at considering such a generalized, random framework, we will analyse the following stochastic system

{\footnotesize
\begin{equation}\label{EQN:RDFHNIn}
\begin{cases}
\partial_t v(t,\xi) &=\Delta_\xi- I_{ion}(v(t,\xi)) -w(t,\xi) +f(\xi) +\partial_t \beta_1(t)\,,\mbox{ in } [0,T] \times \OO\, ,\\
\partial_t w(t,\xi) &= \gamma v(t,\xi)- \delta w(t,\xi)+ \partial_t \beta_2(t)\,, \mbox{ in } [0,T] \times \OO\, ,\\
\partial_\nu v(t,\xi)&=0\, ,\quad \mbox{ on } [0,T] \times \partial \OO \, ,\\
v(0,\xi) &= v_0(\xi),\quad w(0,\xi)=w_0(\xi)\, ,\mbox{ in } [0,T] \times \OO\, .\\
\end{cases}
\, ,
\end{equation}
}
where, as mentioned above, the variable $v$ represents the voltage quantity, $w$ denotes  the recovery variable, while the other components will be specified in a  while. For the moment, let us note that the function $I_{ion}$ is a polynomial of degree $3$, then standard existence and uniqueness results do not hold for eq. \eqref{EQN:RDFHNIn}, since the non-linear term $I_{ion}$ fails to be Lipschitz continuous. Latter problem is often overcome taking into account some additional regularity properties of the infinitesimal generator,  namely the Laplacian $\Delta$ appearing  in eq. \eqref{EQN:RDFHNIn}, such as the so-called $m-$dissipativity assumption, see, e.g., \cite{Alb3,AlbeverioDPMastrogiacomoPotential,DPZ} and references therein, for details.

We will not concern in the present paper with the existence and uniqueness result, since it is an already established result in literature, but on the existence of an optimal control for the aforementioned equation. In particular in \cite{BCDP}, the existence and uniqueness of an optimal control has been proved for a similar equation, without the recovery variable $w$. To prove the existence of an optimal control in the stochastic case is a rather delicate point and it implies the use  of non trivial results. In particular the main result of the present work, is based, following \cite{BCDP}, on the Ekelands's variational principle.

The present work is so structured, in section \ref{SEC:AS} we introduce the main notation and assumptions used throughout the work, and we state the existence and uniqueness result for the main equation of interest. Then, in section \ref{SEC:OCP}, we derive the main result, namely we prove the existence and uniqueness solution of the optimal control problem associtaed to the FH-N model with recover variable, exploiting the Ekelands's variational principle

\section{The abstract setting}\label{SEC:AS}

Let us consider the following controlled stochastic FitzHugh-Nagumo system of equations
{\footnotesize
\begin{equation}\label{EQN:RDFHN}
\begin{cases}
\partial_t v(t,\xi) &=\Delta_\xi- I_{ion}(v(t,\xi)) -w(t,\xi) +f(\xi) + B_v u(t,\xi) +\partial_t \beta_1(t)\,,\mbox{ in } [0,T] \times \OO\, ,\\
\partial_t w(t,\xi) &= \gamma v(t,\xi)- \delta w(t,\xi)+ \partial_t \beta_2(t)\,, \mbox{ in } [0,T] \times \OO\, ,\\
\partial_\nu v(t,\xi)&=0\, ,\quad \mbox{ on } [0,T] \times \partial \OO \, ,\\
v(0,\xi) &= v_0(\xi),\quad w(0,\xi)=w_0(\xi)\, ,\mbox{ in } [0,T] \times \OO\, .\\
\end{cases}
\, ,
\end{equation}
}
where $v=v(t, \xi)$  represents the transmembrane electrical potential, $w=w(t,\xi)$ is a recovery variable, also known as gating variable and which can be used to describe the potassium conductance, $\OO \subset \RR^d$, $d=2,3,$ is a bounded and open set with smooth boundary $\partial \OO$. Furthermore $\Delta_\xi$ is the Laplacian operator with respect to the spatial variable $\xi$, while $\gamma$ and $\delta$ are positive constants representing phenomenological coefficients, $\nu$ is the outer unit normal direction to the boundary $\partial \OO$ and $\partial_\nu$ denotes the derivative in the direction $\nu$, $f(\xi) \in L^\infty(\OO)$ is a given external forcing term, $I_{ion}$ represents the {\it Ionic current} assumed to be as in the FitzHugh-Nagumo model, namely it is taken as a cubic non-linearity of the following form $I_{ion}(v)= v(v-a)(v-b)$, $v_0$, $w_0 \, \in L^2(\OO)$. and $\beta_1$ and $\beta_2$ two independent $Q_i$-Brownian motions, $i = 1,2$,  $Q_i$ being positive trace class commuting operators. Eventually we assume that the two operators $Q_1$ and $Q_2$ diagonalize on the same basis $\{e_k\}_{k \geq 1}$, namely we assume that there exists a sequence of positive real numbers $\{\lambda_k^i\}_{k \geq 1}$, $i=1,\,2$ such that
\[
Q_i \, e_k = \lambda^i_k \, e_k \, , \quad i=1,\,2\, ,\quad k \geq 1\, ,
\]
moreover we also assume that $Tr Q_i< \infty$, $i = 1, \, 2$. Eventually let $U$ be a Hilbert space equipped with the scalar product $\langle \cdot,\cdot \rangle_U$, we have that $u:[0,T] \to U$ denotes the control and $B_v \in L(U,L^2(\OO))$.

In order to rewrite \eqref{EQN:RDFHN} in a more compact form as an infinite dimensional stochastic evolution equation, let us define the Hilbert space $H := L^2(\OO) \times L^2(\OO)$ endowed with the inner product
\begin{equation}
\left  \langle (v_1,w_1), (v_2,w_2)\right \rangle_H = \gamma  \langle v_1,v_2 \rangle_{2} + \langle w_1,w_2  \rangle_{2} \, ,
\end{equation}
where $\langle \cdot,\cdot \rangle_2$ denotes the usual scalar product in $L^2(\OO)$, and the corresponding norm will be indicated by $|\cdot |_2$. Let us further introduce the space $V:= H^1(\OO) \times L^2(\OO)$ with the norm
\[
|X|_V^2 = \gamma |v|^2_{H^1} + |w|^2_2\, ,\quad X = (v,w) \in H\, .
\]
We then define the operator $A:D(A) \subset H \to H$ as follows
\[
A = 
\begin{pmatrix}
A_0 v  & -w  \\
 \gamma v & -\delta w \\
\end{pmatrix}\, ,\quad 
A_0 =  \Delta_\xi\;,
\]
with domain given by
\begin{equation*}
\begin{split}
&D(A) := D(A_0) \times L^2(\OO)\;, \\ 
&D(A_0)  :=\{u \in H^2(\OO): \partial_\nu u(\xi) =0 \, \mbox{ on }  \partial \OO\},
\end{split}
\end{equation*}

In particular we have that $A$ generates a $C_0-$semigroup satisfying 
\[
\| e^{tA}\| \leq e^{-\omega t} \, ,\quad \omega >0\, ,
\]
see, e.g. \cite{Bon}.
 
We further define the non-linear operator $$F: D(F) := L^6(\OO)\times L^2(\OO) \to H\;,$$ as
\[
F \begin{pmatrix} v \\ w \end{pmatrix}    =
\begin{pmatrix} I_{ion}(v) +f \\ 0 \end{pmatrix}=
\begin{pmatrix} -v(v-a)(v-b) +f \\ 0 \end{pmatrix}.
\]
%We refer to \cite{Dap} for a detailed derivation of the present setting.
%Eventually we make the following assumptions concerning the noise. 
%From the separability of the Hilbert space $H$ we have that it exists an orthonormal basis $\{e_k\}_{k\in \mathbb{N}}$ made of eigenvalues of $A_0$ such that the following bound holds
%\[
%\exists\, M>0, |e_k(\xi)| \leq M,\, \xi \in \OO , \, k \in \mathbb{N}\, .
%\]
In what follows we will assume that it exists a positive constant $\eta$ such that
\[
\left \langle F(x) - F(y) - \eta (x-y) , x-y \right \rangle < 0\, ,\quad x\, , \, y \in H\, ,
\]
and also that it holds $\omega - \eta >0$. This implies that the term $A+F$ is $m-$dissipative in the sense of \cite{Dap}.

Let us thus consider the filtered probability space $\left (\Omega, \mathcal{F}, \mathcal{F}_t, \mathbb{P}\right )$, such that the two independent Wiener processes $\beta_1$ and $\beta_2$ are adapted to the filtration $\mathcal{F}_t$, $\forall\,\, t \geq 0$, and we define $W(t) = (\beta_1(t),\beta_2(t))$ a cylindrical Wiener process on $H$ and by $Q$ the operator
\[
Q = 
\begin{pmatrix}
Q_1  & 0  \\
0 & Q_2 \\
\end{pmatrix}
 \in \mathcal{L}(H;H)\;.
\]
%%Furthermore we have that 
%\[
%\beta_i \in C\left ([0,T]; L^2(\Omega,L^2(\OO))\right ),\quad  i=1,2\, ,
%%\]
%with $\mathcal{L}(\beta_i (t)) \sim \mathcal{N}\left (0, t \sqrt{Q}_i\right )$, $i=1,2$, with $Q_i$ a linear operator on $L^2(\OO)$. We can assume that the operators $Q_i$ are of trace class and that they diagonalize on the same basis $\{e_k\}_{k\in \mathbb{N}}$ of the Hilbert space $H$, namely $Q_i e_k = \lambda_k^i e_k$, $i=1,2$. We also assume that
%$
%\sum_{i=1}^2 \sum_{k=1}^\infty \lambda_k^i < \infty,
%$
%and we denote by $W(t)$  a cylindrical Wiener process on $H$ and 
%we refer to \cite{Dap} for a complete treatment of the topic.
Exploiting  previously introduced notation, eq. \eqref{EQN:RDFHN} can be rewritten as follows
\begin{equation}\label{EQN:FHN}
\begin{cases}
dX(t)=[AX(t)+F(X(t))]dt +  \sqrt{Q}dW(t),\\
X(0) = x_0  \in H\, ,\quad  t \in [0,T]\, ,
\end{cases}   
\, .
\end{equation}

\begin{definition}\label{DEF:Mild}
We say that the function $X \in C_W([0,T];H)$ is called a \textit{mild solution} to \eqref{EQN:FHN} if $X(t): [0,T] \to H$ is continuous $\mathbb{P}-$a.s., $\forall \, t \in [0,T]$ and it satisfies the stochastic integral equation
\[
X(t) = e^{-A t}x + \int_0^t e^{-(t-s)A}\left (-F(s)\right )ds + \int_0^t e^{-(t-s)A}\left (\sqrt{Q}\right ) dW(s), \quad \forall \,\, t \in [0,T]\, .
\]
\end{definition}
%\begin{definition}
%sdfsadfsdfasd
%\end{definition}
The we have the following existence and uniqueness result concerning equation \eqref{EQN:FHN}.
\begin{theorem}\label{THM:E!}
For any $x \in D(F)$, there exists a unique mild solution $X$ to \eqref{EQN:FHN} which satisfies
\[
X \in  L^2_W \left (\Omega;C\left ([0.T];H\right )\right ) \cap  L^2_W \left (\Omega;L^2\left ([0.T];V\right )\right )  \, .
\]
\end{theorem}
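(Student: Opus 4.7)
The plan is to reduce \eqref{EQN:FHN} to a pathwise random evolution equation via the stochastic convolution, and then to solve the resulting deterministic problem by a Yosida regularization of the $m$-dissipative nonlinearity. Denote by $\{S(t)\}_{t \geq 0}$ the $C_0$-semigroup generated by $A$, so that $\|S(t)\|_{\mathcal{L}(H)} \leq e^{-\omega t}$, and introduce the Ornstein--Uhlenbeck process
$$Z(t) := \int_0^t S(t-s)\sqrt{Q}\, dW(s).$$
Since $Q_1, Q_2$ are trace class and the semigroup is exponentially stable, the standard factorization method (see, e.g., \cite{DPZ}) yields $Z \in L^2_W(\Omega; C([0,T];H)) \cap L^2_W(\Omega; L^2([0,T];V))$ with continuous paths also taking values in $L^6(\OO) \times L^2(\OO) = D(F)$. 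The substitution $Y := X - Z$ then recasts \eqref{EQN:FHN} as the pathwise random problem
$$\frac{dY}{dt}(t) = AY(t) + F(Y(t) + Z(t)), \quad Y(0) = x_0,$$
to be solved $\varpi$-by-$\varpi$ for almost every $\varpi \in \Omega$ (I reserve $\omega$ for the decay constant).

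For each $\lambda > 0$, replace $F$ by its Yosida approximation $F_\lambda$, which is everywhere defined and globally Lipschitz on $H$. A standard contraction-mapping argument in $C([0,T]; H)$ then produces a unique mild solution $Y_\lambda$ of the regularized problem. Next, testing the regularized equation against $Y_\lambda$ in the inner product of $H$ and invoking the dissipativity inequality $\langle F(x)-F(y)-\eta(x-y),x-y\rangle<0$ together with $\omega-\eta>0$, I would derive a priori bounds of the form
$$\sup_{t\in[0,T]}|Y_\lambda(t)|_H^2 + \int_0^T |Y_\lambda(s)|_V^2\, ds \leq C\bigl(1+|x_0|_H^2+\Phi(Z)\bigr),$$
uniformly in $\lambda$, where $\Phi(Z)$ collects suitable norms of the stochastic convolution dictated by the cubic growth of $I_{ion}$, and the coercive left-hand side exploits the $V$-ellipticity coming from the Laplacian in $A$.

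The principal obstacle is the passage to the limit $\lambda \to 0^+$: since $I_{ion}$ is cubic, the nonlinear term $F(Y_\lambda + Z)$ is not weakly continuous, and its weak limit must be identified with $F(Y + Z)$. I would proceed by a Minty-type monotonicity argument, applying the dissipativity inequality to $Y_\lambda$ against arbitrary test elements $\zeta \in D(F)$ and passing to the weak limit along a subsequence $Y_\lambda \to Y$ extracted from the uniform estimates. Uniqueness then follows directly: for two mild solutions $X_1, X_2$ sharing the same initial datum, the difference satisfies a pathwise energy identity in which the nonlinear contribution is controlled by $\eta |X_1-X_2|_H^2$, so that Gronwall's inequality combined with $\omega-\eta>0$ forces $X_1 \equiv X_2$ in $C_W([0,T];H)$.
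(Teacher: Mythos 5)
The paper does not actually prove this theorem itself: it is dispatched by citing \cite[Prop.~3.8]{Alb3} and \cite[Theorem~3.1]{Bon}, and your sketch reproduces essentially the strategy carried out in those references (subtraction of the stochastic convolution, approximation of the quasi-dissipative cubic nonlinearity, uniform energy estimates from $\langle F(x)-F(y)-\eta(x-y),x-y\rangle<0$ with $\omega-\eta>0$, identification of the limit by monotonicity, and Gronwall for uniqueness). The only step you should not treat as automatic is the claim that $Z$ has continuous paths in $L^6(\mathcal{O})\times L^2(\mathcal{O})$: under the bare trace-class hypothesis this needs the smoothing of the Neumann heat semigroup together with the Sobolev embedding $H^1(\mathcal{O})\hookrightarrow L^6(\mathcal{O})$ for $d\leq 3$ (or the stronger condition $\mathrm{Tr}\,AQ<\infty$ that the paper invokes later), not just the factorization method.
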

\begin{proof}
Under above assumptions the proof follows from \cite[Prop. 3.8]{Alb3} or \cite[theorem 3.1]{Bon}.
\end{proof}

\section{The optimal control problem}\label{SEC:OCP}

Let us now consider a controlled version of equation \eqref{EQN:FHN}. Let then $B \in L\left (U;H\right )$ defined as
\[
B u = \binom{B_v u}{0}\, , \quad B_v \in L(U;L^2(\mathcal{O}))\, .
\]
We shall denote by $\mathcal{U}$ the space of all $\FT$adapted processes $u : [0,T ] \to U$ s.t. $\E{\int_0^T |u(t)|^2_U dt} < \infty$. The space $\mathcal{U}$ is a Hilbert space with the norm $|u|_{\mathcal{U}} = \left (\E{\int_0^T |u(t)|_U^2 dt}\right )^\frac{1}{2}$ and scalar product 
\[
\langle u,v \rangle_{\mathcal{U}} =\left ( \E{\int_0^T \langle u(t),v(t) \rangle_U dt}\right )^\frac{1}{2}, \quad \forall \, u,\, v \in \U\, ,
\]
where $\langle\cdot,\cdot\rangle_U$ is the scalar product of $U$.

Consider the functions $g$, $g_0\, : \, \RR \to \RR$ and $h : U \to \bar \RR := ]-\infty,\infty]$, which satisfy the following conditions
\begin{description}
\item[(i)] $g$, $g_0 \in C^1\left (H\right )$ and $D g$, $D g_0 \in Lip\left (H;H\right )$, where $D$ stands for the Fr\'{e}chet differential\\
\item[(ii)] $h$ is convex, lower-semicontinuous and $\left (\partial h\right )^{-1} \in Lip(U)$ where $\partial h: U \to U$ is the subdifferential of $h$, see, e.g., \cite[p. 82]{Bar4}. Moreover we assume that $\exists$ $\alpha_1>0$ and $\alpha_2 \in \RR$ s.t. $h(u) \geq \alpha_1 |u|^2_U + \alpha_2$, $\forall$ $u \in U$, and we set $L= \|(\partial h)^{-1} \|_{Lip(U)}$.
\end{description}
We consider the following optimal control problem 
\begin{equation}\label{EQN:P}
  \tag{P}
\mbox{Minimize} \,  \E{\int_0^T \left (  g(X(t)) + h(u(t))\right )dt } + \E{g_0(X(T))} \, ,
\end{equation}
subject to $ u \in \mathcal{U} $ and

\begin{equation}\label{EQN:FHNC}
\begin{cases}
dX(t)=[AX(t)+F(X(t))]dt + Bu(t)dt + \sqrt{Q}dW(t)\, ,\\
X(0) = x_0  \in H\, ,\quad  t \in [0,T]\, ,
\end{cases}   
\, .
\end{equation}

%
%In the following we shall assume both \eqref{EQN:2.10A} and $Tr [QA] < \infty$, where $A$ is as above the Laplace operator with domain $H^1_0(\OO) \cap H$.

\begin{theorem}\label{THM:E!4}
Let $x \in D(A)$. Then there exists $C^*>0$ independent of $x$ such that for $LT + \|Dg_0\|_{Lip} < C^*$ there is a unique solution $ \left (u^*,X^*\right ) $ to problem \eqref{EQN:P}.
\end{theorem}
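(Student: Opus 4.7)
The plan is to combine Ekeland's variational principle (to handle the non-convexity forced by the cubic nonlinearity $I_{ion}$, which prevents any direct convexity argument) with a fixed-point/contraction argument whose contraction constant is precisely governed by the quantity $LT+\|Dg_0\|_{Lip}$; this will yield both existence and, under the smallness assumption, uniqueness.

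First I would verify that the cost functional
\[
J(u) := \E{\int_0^T\bigl(g(X^u(t))+h(u(t))\bigr)dt}+\E{g_0(X^u(T))}
\]
is well-defined and bounded from below on $\mathcal{U}$: the growth condition $h(u)\ge \alpha_1|u|_U^2+\alpha_2$ yields coercivity in $\mathcal{U}$, while Theorem \ref{THM:E!} (adapted to the controlled equation \eqref{EQN:FHNC} by standard perturbation arguments, using the $m$-dissipativity of $A+F$ together with $\omega-\eta>0$) gives continuity of the control-to-state map $u\mapsto X^u$ from $\mathcal{U}$ into $L^2_W(\Omega;C([0,T];H))$. Together with the $C^1$ assumption on $g,g_0$ this yields lower semicontinuity of $J$, so $d:=\inf_{\mathcal{U}}J>-\infty$.

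Next I would invoke Ekeland's variational principle on $(\mathcal{U},|\cdot|_{\mathcal{U}})$: for every $\varepsilon>0$ there exists $u_\varepsilon\in\mathcal{U}$ with $J(u_\varepsilon)\le d+\varepsilon$ and
\[
J(u_\varepsilon)\le J(v)+\sqrt{\varepsilon}\,|u_\varepsilon-v|_{\mathcal{U}},\qquad\forall\,v\in\mathcal{U}.
\]
Applying this perturbed minimality condition along the needle variation $v=u_\varepsilon+\lambda(w-u_\varepsilon)$ with $\lambda\downarrow 0$ produces, via the variational equation for $X^u$ linearised around $X^{u_\varepsilon}$ and the associated backward adjoint equation
\[
\begin{cases}
dp_\varepsilon(t) = -\bigl(A^*p_\varepsilon(t)+DF(X^{u_\varepsilon}(t))^*p_\varepsilon(t)+Dg(X^{u_\varepsilon}(t))\bigr)dt + \sigma_\varepsilon(t)\,dW(t),\\
p_\varepsilon(T)=Dg_0(X^{u_\varepsilon}(T)),
\end{cases}
\]
an approximate Pontryagin condition of the form
\[
B^*p_\varepsilon(t)+\partial h(u_\varepsilon(t))\ni \theta_\varepsilon(t),\qquad |\theta_\varepsilon|_{\mathcal{U}}\le\sqrt{\varepsilon},
\]
which, inverting $\partial h$, reads $u_\varepsilon=(\partial h)^{-1}(-B^*p_\varepsilon+\theta_\varepsilon)$. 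Passing to the limit $\varepsilon\to 0$ (the delicate point is the non-Lipschitz term $DF(X^u)^*=\text{diag}(-3v^2+2(a+b)v-ab,0)$; one controls it via the $V$-regularity provided by Theorem \ref{THM:E!} together with the $m$-dissipativity) yields an admissible pair $(u^*,X^*,p^*)$ satisfying the exact optimality system.

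For uniqueness I would set up the solution map $\Gamma:\mathcal{U}\to\mathcal{U}$ sending $u$ to $(\partial h)^{-1}(-B^*p^u)$, where $p^u$ is the adjoint associated to $X^u$, and show it is a strict contraction when $LT+\|Dg_0\|_{Lip}$ is sufficiently small. Using the Lipschitz estimate $\|(\partial h)^{-1}\|_{Lip}=L$, the Lipschitz bounds on $Dg$ and $Dg_0$, the Lipschitz dependence of $X^u$ on $u$ (which follows from Gronwall together with the dissipativity inequality $\langle F(x)-F(y)-\eta(x-y),x-y\rangle<0$ plus $\omega-\eta>0$, so that the linearised semigroup still contracts), and Gronwall on the adjoint, one obtains
\[
|\Gamma(u_1)-\Gamma(u_2)|_{\mathcal{U}}\le L\bigl(C_1T+C_2\|Dg_0\|_{Lip}\bigr)|u_1-u_2|_{\mathcal{U}},
\]
for constants $C_1,C_2$ depending only on $A,F,B$; choosing $C^*$ such that the bracket is $<1$ whenever $LT+\|Dg_0\|_{Lip}<C^*$ gives Banach's fixed point $u^*$, which is necessarily the unique optimal control since any optimizer must satisfy $u=\Gamma(u)$.

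The principal obstacle is the cubic term: one cannot expect $DF$ to be bounded on $H$, so both the Ekeland limit and the contraction estimate require controlling $DF(X^u)^*p$ through the extra $H^1$-integrability of $X^u$ guaranteed by Theorem \ref{THM:E!} and the exponential decay $\|e^{tA}\|\le e^{-\omega t}$. Once these a priori bounds are in place, the combination of Ekeland for existence and Banach for uniqueness closes the argument.
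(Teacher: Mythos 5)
Your overall strategy (Ekeland's principle, linearized state equation, backward adjoint equation, and a smallness condition on $LT+\|Dg_0\|_{Lip}$) matches the paper's, but two of your key steps are asserted where the real work lies, and as stated they do not go through. First, in the existence part you ``pass to the limit $\varepsilon\to 0$'' in the approximate optimality system without explaining how you obtain \emph{strong} convergence of $(X_\varepsilon,p_\varepsilon,u_\varepsilon)$. Weak convergence is not enough to pass to the limit in the cubic term $I_{ion}(v_\varepsilon)$ or in the nonlinear relation $u_\varepsilon=(\partial h)^{-1}(B^*p_\varepsilon+\theta_\varepsilon)$, and it is precisely here — not only in the uniqueness step — that the paper uses the hypothesis $LT+\|Dg_0\|_{Lip}<C^*$: one writes the It\^o energy identities for the differences $X_\varepsilon-X_\lambda$ and $p_\varepsilon-p_\lambda$, absorbs the terms $L\int|p_\varepsilon-p_\lambda|_H^2$ and $\|Dg_0\|_{Lip}|X_\varepsilon(T)-X_\lambda(T)|_H^2$ into the left-hand side using the smallness condition, and concludes via Gronwall that the Ekeland family is Cauchy; only then does one get a.e. convergence, $L^1$-convergence of $I_{ion}(v_\varepsilon)$, and finally optimality of the limit from $\Psi(u_\varepsilon)\le\inf\Psi+\varepsilon$ and lower semicontinuity. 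Your proposal never shows $\Psi(u^*)=\inf\Psi$; since the problem is non-convex, producing a solution of the optimality system is strictly weaker than producing a minimizer.

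Second, your contraction estimate $|\Gamma(u_1)-\Gamma(u_2)|_{\mathcal{U}}\le L(C_1T+C_2\|Dg_0\|_{Lip})|u_1-u_2|_{\mathcal{U}}$ with $C_1,C_2$ ``depending only on $A,F,B$'' cannot hold as stated. The adjoint difference involves $DF(X^{u_1})(p^{u_1}-p^{u_2})$ and $(DF(X^{u_1})-DF(X^{u_2}))p^{u_2}$, and $DF(X)$ is quadratic in $v$, hence unbounded on $H$; any Gronwall constant necessarily involves $\sup_t|X^u(t)|_H^2$ and $\sup_t|p^u(t)|_H$, which are controlled only in expectation, not pathwise. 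This is exactly why the paper localizes on the sets $\Omega_j=\{\sup_\varepsilon\sup_t(|X_\varepsilon|_H^2+|X_\varepsilon|_V^2+|p_\varepsilon|_H^2)\le j\}$ and accepts a $j$-dependent Gronwall factor $e^{C_jT}$, keeping the smallness condition only for the two terms whose constants are genuinely universal ($LT$ and $\|Dg_0\|_{Lip}$). Without this localization (or some $L^\infty$ a priori bound you have not supplied), Banach's fixed point theorem is not available on all of $\mathcal{U}$, and your uniqueness argument reduces to the same difference estimates the paper carries out. To repair the proposal you should: (i) move the smallness condition into the existence step to prove the Cauchy property of the Ekeland family, (ii) add the $\Omega_j$-localization to handle the unbounded $DF$, and (iii) conclude optimality of the limit from the Ekeland inequality rather than from the optimality system alone.
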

\begin{proof}
Let us consider the function $\Psi: \mathcal{U} \to \bar \RR$ defined by
\[
\Psi(u) = \E{\int_0^T \left (  g(X^u(t)) + h(u(t))\right )dt } + \E{g_0(X^u(T))}\, ,
\]
where $X^u$ is the solution to \eqref{EQN:FHNC}. Recall that $\Psi$ is lower-semicontinuous.

We shall apply Ekeland's variational principle (See, e.g., \cite{Eke} or also \cite{BCDP,Bar3}), that is there is a sequence $ \{u_\epsilon\} \subset \mathcal{U} $ such that
\begin{equation}\label{EQN:2.4}
\begin{split}
\Psi(u_\epsilon) &\leq \inf \{ \Psi(u) \, ; u \in \mathcal{U}\}+ \epsilon\, ,\\
\Psi(u_\epsilon) &\leq \Psi(u) + \sqrt{\epsilon}\left |u_\epsilon - u\right |_{\mathcal{U}} \, ,\quad \forall \, u \in \mathcal{U}\, .
\end{split}
\end{equation}

In other words,
\[
u_\epsilon = \arg \min_{u \in \mathcal{U}} \{  \Psi(u) + \sqrt{\epsilon}\left |u_\epsilon - u\right |_{\mathcal{U}} \}\, .
\]
Hence $ \left (X^{u_\epsilon},u_\epsilon\right ) $ is a solution to the optimal control problem
\begin{equation}\label{EQN:2.5}
\begin{split}
&\min \left \{\E{\int_0^T \left ( g (X^u(t)+ h(u(t))\right ) dt } + \E{g_0\left (X^u(T)\right )}\right .+\\
&\qquad \quad+ \left .\sqrt{\epsilon} \left ( \E{\int_0^T \left |u(t) - u_\epsilon(t) \right |^2_U dt} \right )^{\frac{1}{2}} \, ; u \in \mathcal{U} \right \}\, .
\end{split}
\end{equation}

Equation \eqref{EQN:2.5} means that for all $v \in \mathcal{U}$ and $\lambda >0$ it holds
\[
\begin{split}
& \E{\int_0^T \left ( g(X^{u_\epsilon + \lambda v}(t) + h((u_\epsilon + \lambda v)(t)) \right )dt} + \E{g_0(X^{u_\epsilon + \lambda v}(T))}+ \\
& +\lambda \sqrt{\epsilon}\left ( \E{\int_0^T \left |v(t)\right |^2_{U}dt }\right )^{\frac{1}{2}} \leq\\
&\leq \E{\int_0^T \left (g (X_\epsilon (t) ) + h(u_\epsilon(t)) \right )dt} + \E{g_0(X_\epsilon(T))}\, ,
\end{split}
\]
that is we get
\begin{equation}\label{EQN:21ab}
\begin{split}
&\E{\int_0^T \left \langle Dg(X_\epsilon(t)),Z^v(t) \right \rangle_2 dt} + \E{\int_0^T h'(u_\epsilon(t),v(t))dt} +\\
&+ \E{\left \langle Dg_0(X_\epsilon(T)),Z^v(T) \right \rangle_2} + \sqrt{\epsilon} \left (\E{\int_0^T |v(t)|_U^2 dt}\right )^\frac{1}{2} \leq 0\, ,\quad \forall \, v \in \U\, ,
\end{split}
\end{equation}
where $Z^v$ solves the system in variations associated with \eqref{EQN:FHNC},
\begin{equation}\label{EQN:3.12a}
\begin{cases}
\frac{\partial}{\partial t}Z^v(t)  = A Z^v(t) + DF(X_\epsilon(t))Z^v(t) + Bv(t) \, ,t \in  [0,T] \, ,\\
Z^v(0) = 0 \, ,\\
\end{cases}
\end{equation}
and $h' : U \times U \to \RR$ is the directional derivatives of $h$,  see, e.g., \cite[p.81]{Bar4}, namely 
\[
h'(u_\epsilon,v) = \lim_{\lambda \downarrow 0} \frac{h(u_\epsilon + \lambda v)-h(u_\epsilon)}{\lambda}\, ,\quad \forall \, v \in U\, .
\]

We thus associate with \eqref{EQN:FHNC} the dual stochastic backward equation
\begin{equation}\label{EQN:3.7a}
\begin{cases}
 d p_\epsilon(t) = - \left [A p_\epsilon(t) dt + DF(X_\epsilon)p_\epsilon(t)- D g(X_\epsilon(t)) \right ] dt+ \kappa_\epsilon(t) \sqrt{Q}dW(t) \, , t \in [0,T]\, ,\\
 p_\epsilon(T) =- D g_0(X_\epsilon(T)) \, ,\\
\end{cases}
\, .
\end{equation}
It is well-known that equation \eqref{EQN:3.7a} has a unique solution $(p_\epsilon,\kappa_\epsilon)$ satisfying
\[
\begin{split}
p_\epsilon & \in L^\infty_W\left ([0,T];H \right ) \cap L^2_W \left ([0,T];V\right )\, ,\\
k_\epsilon & \in L^2_W\left ([0,T]; H\right ) \, ,
\end{split}
\]
(See, e.g., \cite[Prop. 4.2]{FT} or \cite{Tes}). By It\^{o}'s formula we have from \eqref{EQN:3.12a} and \eqref{EQN:3.7a} that
\[
d \left \langle p_\epsilon, Z^v \right \rangle_H = \left \langle d p_\epsilon, Z^v \right \rangle_H + \left \langle p_\epsilon, d Z^v \right \rangle_H\, ,
\]
and this immediately implies
\[
\E{\int_0^T \left \langle Dg(X_\epsilon(t)),Z^v(t) \right \rangle_H dt} + \E{\left \langle Dg_0(X_\epsilon(T)),Z^v(T) \right \rangle_H} = \E{\int_0^T \left \langle Bv(t),p_\epsilon(t) \right \rangle_H dt}\, ,
\]
which substituted in \eqref{EQN:21ab} yields that $\forall$ $v \in \U$, the following inequality holds
\[
\begin{split}
& \E{\int_0^T h'(u_\epsilon(t),v(t))dt}+\sqrt{\epsilon} \left (\E{\int_0^T |v(t)|_U^2 dt}\right )^\frac{1}{2} \leq \\
& \leq \E{\int_0^T \left \langle B^* p_\epsilon (t),v(t) \right \rangle_U dt} \, .
\end{split}
\]
Let $G(u):= \E{\int_0^T h(u(t))dt}$, then its sub-differential $\partial G: \mathcal{U} \to \U$, evaluated in $u_\epsilon$  is given by 
\[
\partial G(u_\epsilon)= \left \{v^* \in \U \, : \, \langle v,v^* \rangle_{\U} \leq \E{\int_0^T h'(u_\epsilon(t),v(t))dt}\, , \, \forall \, v \in \U \right \}\, .
\]
(See, e.g., \cite[p.81]{Bar4}). Then we infer that 
\[
u_\epsilon(t) = (\partial h)^{-1}\left (B^* p_\epsilon (t) + \sqrt{\epsilon} \tilde{\theta}_\epsilon\right )\, , \, \, t \in [0,T]\, , \quad \mathbb{P}-a.s. \, ,
\]
where $\tilde{\theta}_\epsilon \in \U$ and $|\tilde{\theta}_\epsilon|_{\U} \leq 1$, $\forall \, \epsilon > 0$.

Therefore, we have shown that
\begin{equation}\label{EQN:2.6c}
\begin{split}
& u_\epsilon = (\partial h)^{-1} \left (B^* p_\epsilon + \theta_\epsilon\right )\, , \|\theta_\epsilon \|_{L^2\left ([0,T] \times \Omega ;U\right )} \leq \sqrt{\epsilon}\, ,\\
& d p_\epsilon(t) = - \left [A p_\epsilon(t) dt + DF(X_\epsilon)p_\epsilon(t)- D g(X_\epsilon(t)) \right ] dt+ \kappa_\epsilon(t) \sqrt{Q}dW(t) \, , t \in [0,T]\, ,\\
& p_\epsilon(T) = -D g_0(X_\epsilon(T)) \, ,\\
\end{split}
\, .
\end{equation}

Using the It\^{o} formula applied to $|X|^2_2$, we have that $\forall$ $\epsilon >0$ it holds
\begin{equation}\label{EQN:ItoFHN}
\begin{split}
\left | X_\epsilon(t) \right |^2_H &= |x|^2_H + 2 \int_0^t \left \langle AX_\epsilon(s) + F(X_\epsilon(s)) + B u_\epsilon(s), X_\epsilon(s) \right \rangle_H ds+ \\
&+ Tr Q t + 2 \int_0^t \left \langle X_\epsilon(s), \sqrt{Q} dW(s)\right \rangle_H\, .
\end{split}
\end{equation}

(Here and everywhere in the following we shall denote by $C$ several positive constants independent of $\epsilon$.)

From the fact that $\left \langle X_\epsilon(s), \sqrt{Q} dW(s)\right \rangle_H$ is a square integrable martingale, \cite[Th. 3.14, Th. 4.12]{Dap} and recalling the assumption $Tr AQ < \infty$ we have that
\[
\E{\sup_{t \in [0,T]} \left |\int_0^t \left \langle X_\epsilon(s), \sqrt{Q} dW(s)\right \rangle_H \right |} \leq C \E{\int_0^T |X_\epsilon (t) |^2_H dt} \, ,
\]
and from the fact that $A$ generates a strongly continuous semigroup, see, e.g. \cite{Bon}, we have that
\[
\begin{split}
&\int_0^t \left \langle AX_\epsilon(s),X_\epsilon(s) \right \rangle_H ds \leq C_1 \int_0^t |X_\epsilon (s)|^2_V ds\, .\\
\end{split}
\]
We also have that it holds,
\[
\int_0^t \left \langle F(X_\epsilon(s)) ,X_\epsilon(s) \right \rangle_H ds \leq C |X_\epsilon(t)|^2_H\, ,\\
\]
see, e.g. \cite{Alb3,Bon} for details. Eventually from assumption (ii) we have
\[
\int_0^t \left \langle B u(s),X_\epsilon(s) \right \rangle_H ds \leq L^{-1} \int_0^T |u_\epsilon(t)|^2_U dt\, .
\]
Taking then the expectation on both side of \eqref{EQN:ItoFHN} yields
\[
\E{\sup_{t \in [0,T]}\left | X_\epsilon(t) \right |^2_H} + \E{\int_0^T |X_\epsilon (t)|^2_V dt} \leq C_1+ C_2 \int_0^T \E{\sup_{s \in [0,t]} \left |X_\epsilon (s)\right |^2_H dt}
\]
and applying Gronwall's lemma it follows eventually that
\begin{equation}\label{EQN:2.10}
\E{\sup_{t \in [0,T]} \left |X_\epsilon (t)\right |^2_H } + \E{\int_0^T \left |X_\epsilon (t)\right |^2_{V} dt} \leq C (1+ |x|^2_H) \, .
\end{equation}

%If we now apply the It\^{o} formula to the function $X \to \frac{1}{2}|X|^2_{V}$, taking into account that $Tr[QA]<\infty$ and proceeding as above we have that 
%\begin{equation}\label{EQN:2.10C}
%\E{\sup_{t \in [0,T]} |X_\epsilon (t)|^2_{V}} + \E{\int_0^T \left |X_\epsilon (t)\right |^2_{V} dt} \leq C(1+ |x|^2_{V})\, .
%\end{equation}

In an analogous manner, applying It\^{o} formula to $|p_\epsilon|^2_H$ by \eqref{EQN:2.6c} we obtain that

\[
\begin{split}
&\frac{1}{2}d |p_\epsilon (t)|_H^2 =-\left \langle A p_\epsilon (t)+ DF(X_\epsilon(t))p_\epsilon(t)- Dg(X_\epsilon(t)), p_\epsilon(t) \right \rangle_H + \\
&= \frac{1}{2} \left \langle \kappa_\epsilon(t),\kappa_\epsilon(t) \right \rangle_H dt + \left \langle p_\epsilon(t),\kappa_\epsilon(t) \sqrt{Q}d W(t) \right \rangle_H \, .
\end{split}
\]
which yields after applying arguments similar to the ones above 
\begin{equation}\label{EQN:2.11}
\begin{split}
&\E{\sup_{t \in [0,T]} |p_\epsilon (t)|^2_H} + \E{\int_0^T |p_\epsilon (t)|^2_{V}\, dt} + \E{\int_0^T |\kappa_\epsilon(t)|_H^2 dt}  \leq \\
&\leq C + \E{\left |X_\epsilon(T)\right |^2_H} \leq C \, ,\quad \forall \, \epsilon > 0\, . 
\end{split}
\end{equation}

We have that
\begin{equation}
\begin{split}
&\frac{\partial}{\partial t} \left (X_\epsilon(t) -X_\lambda(t)\right ) =A \left (X_\epsilon(t) -X_\lambda(t)\right ) +\left (F\left (X_\epsilon(t) \right ) -F \left (X_\lambda (t)\right )\right )  +\\
&+BB^*( p_\epsilon(t)-p_\lambda(t)) + B(\theta_\epsilon(t) - \theta_\lambda(t ))\, .
\end{split}
\end{equation}
In virtue of \eqref{EQN:2.11} this yields
\[
\begin{split}
&\frac{1}{2} \left |X_\epsilon(t) -X_\lambda(t)\right |^2_H + \int_0^t\left |X_\epsilon(s) -X_\lambda(s)\right |_V^2 ds \leq\\
&\leq  \int_0^t \left \langle F\left (X_\epsilon(s)\right )  -F \left (X_\lambda (s)\right ),X_\epsilon(s)-X_\lambda (s)\right \rangle_H \, d s \\
&\quad+ L\int_0^t |p_\epsilon(s) -p_\lambda(s)|_H|X_\epsilon (s)-X_\lambda (s)|_H ds  \\
&\quad+C  \int_0^t |\theta_\epsilon(s) -\theta_\lambda(s)|_U|X_\epsilon (s)-X_\lambda (s)|_H ds \, , \quad \forall \, t \in [0,T]\, ,
\end{split}
\]
where $L = \|(\partial h)^{-1}\|_{Lip}$.

We further have that, see, e.g. \cite{Alb3,Bon}
\[
\left \langle F(X_\epsilon)-F(X_\lambda),X_\epsilon - X_\lambda\right \rangle_H \leq C \left |X_\epsilon - X_\lambda \right |^2_H\, ,
\]
%which yields
%then 
%{\small
%\[
%\begin{split}
%- \int_0^t \int_{\OO} \left (f(X_\epsilon) -f(X_\lambda)\right )(X_\epsilon -X_\lambda) \, d\xi \, ds \leq
% C \int_0^t |X_\epsilon(s) - X_\lambda(s)|^2_2 ds\, , \quad \forall \, \epsilon , \, \lambda \, >0 \, ,
%\end{split}
%\]
%}
which yields, for $t \in [0,T]$, applying Young inequality,
\begin{equation}\label{EQN:2.13}
\begin{split}
&\left |X_\epsilon(t) -X_\lambda(t)\right |^2_2 + \int_0^t\left |X_\epsilon(s) -X_\lambda(s)\right |_{V}^2 ds \leq \\
&\leq  C\left ( L\int_0^t |p_\epsilon (s) -p_\lambda(s) |^2_2 ds+ \int_0^t |X_\epsilon(s) - X_\lambda(s) |_H^2 ds +\epsilon + \lambda \right )\, .
\end{split}
\end{equation}

Applying Gronwall's lemma in \eqref{EQN:2.13}, we have
\begin{equation}\label{EQN:2.16}
\begin{split}
&\left |X_\epsilon(t) -X_\lambda(t)\right |^2_2 + \int_0^t\left |X_\epsilon(s) -X_\lambda(s)\right |_{V}^2 ds \leq \\
&\leq  C \left (L\int_0^T |p_\epsilon (s) -p_\lambda(s) |^2_2 ds+ \epsilon + \lambda \right )  \, , \quad \forall \, \epsilon\, , \, \lambda \, > 0\, , \, t \in [0,T]\, .
\end{split}
\end{equation}

Similarly we get by the It\^{o} formula 
\begin{equation}\label{EQN:2.17}
\begin{split}
&\left |p_\epsilon(t) -p_\lambda(t)\right |^2_H + \int_t^T\left |p_\epsilon(s) -p_\lambda(s)\right |_{V}^2 ds + \frac{1}{2} \int_t^T |\kappa_\epsilon(s) - \kappa_\lambda(s) |^2_H ds =  \\
&= |D g_0(X_\epsilon(T)) - D g_0 (X_\lambda(T))|^2_H+\\
&\quad+\int_t^T  \left \langle DF(X_\epsilon(s))p_\epsilon(s) - DF(X_\lambda(s))p_\lambda(s), p_\epsilon(s)-p_\lambda(s)\right \rangle_H ds +\\
&\quad- \int_t^T \left  \langle \kappa_\epsilon(s) - \kappa_\lambda(s))\sqrt{Q} dW(s) , X_\epsilon(s) - X_\lambda(s) \right \rangle_H  \leq \\
&=\int_t^T \left \langle DF(X_\epsilon(s))(p_\epsilon(s)-p_\lambda(s)),p_\epsilon(s)-p_\lambda(s) \right \rangle \, ds +\\
&\quad+ \int_t^T  \left \langle  p_\lambda (s) (DF(X_\epsilon(s))- DF(X_\lambda(s))),p_\epsilon(s)-p_\lambda(s)\right \rangle_H  \, ds +\\
& \quad + \int_t^T \left  \langle \kappa_\epsilon(s)- \kappa_\lambda(s))\sqrt{Q} dW(s) , X_\epsilon(s) - X_\lambda(s) \right \rangle_H +\\
&\quad+ |D g_0\left (X_\epsilon(T)\right )- D g_0\left (X_\lambda(T)\right )|_H^2\leq \\
&\leq C \left ( \int_t^T (|X_\epsilon (s)|^2_H +1) |p_\epsilon(s)-p_\lambda(s)|_H^2 \, ds\right )+\\
& \quad+C \left (\int_t^T \left (1+|X_\epsilon (s)|^2 + |X_\lambda (s)|^2\right ) | X_\epsilon(s)- X_\lambda(s)|_H |p_\epsilon(s)-p_\lambda(s)|_H |p_\epsilon (s) |_H \, ds \right )+\\
& \quad + \int_t^T \left  \langle \kappa_\epsilon(s) - \kappa_\lambda(s)) \sqrt{Q} dW(s) , X_\epsilon(s) - X_\lambda(s) \right \rangle_H+ \\
& \quad+ \|Dg_0\|_{Lip} |X_\epsilon(T) - X_\lambda(T)|_H^2  \, , \quad t \in [0,T]\, , \mathbb{P}-a.s. \, .
\end{split}
\end{equation}

%Proceeding as above, we also have
%\begin{equation}\label{EQN:2.18}
%\begin{split}
%&\int_{\OO} |X_\epsilon (s)||p_\epsilon (s) -p_\lambda (s)|^2 d \xi \leq |p_\epsilon (s) - p_\lambda (s)|_4 |p_\epsilon (s) - p_\lambda (s)|_2 |X_\epsilon(s)|_4 \leq \\
%&\leq \frac{1}{2} |p_\epsilon(s) -p_\lambda(s)|^2_{H^1_0(\OO) } + \frac{1}{2} |p_\epsilon(s) -p_\lambda(s)|^2_{2}|X_\epsilon(s)|_4^2 \, .
%\end{split}
%\end{equation}

Exploiting again Young's inequality, and denoting for short 
\[
T_{\epsilon,\lambda} := (1+|X_\epsilon|^2_H + |X_\lambda|^2_H) |p_\epsilon|_H  \, ,
\]
we get,
\begin{equation}\label{EQN:2.19}
\begin{split}
& \left (| X_\epsilon(s)- X_\lambda(s)|_H |p_\epsilon(s)-p_\lambda(s)|_H \right )T_{\epsilon,\lambda} \leq \\
&\leq C \left ( |X_\epsilon - X_\lambda|_H^{2} + \left | p_\epsilon -p_\lambda \right |_H^{2} \right ) T_{\epsilon,\lambda}\, .
\end{split}
\end{equation}

Substituting now \eqref{EQN:2.19} into \eqref{EQN:2.13}, \eqref{EQN:2.17}, we obtain $\mathbb{P}-$a.s.
\begin{equation}\label{EQN:2.18b}
\begin{split}
&  |X_\epsilon(t) - X_\lambda(t)|^2_H + \left |p_\epsilon(t) -p_\lambda(t)\right |_H^2 + \int_0^t |X_\epsilon(s) - X_\lambda(s)|^2_{V} ds +\\
&\quad+ \int_t^T \left |p_\epsilon(s) -p_\lambda(s)\right |_{V}^2 ds + \int_t^T |\kappa_\epsilon(s) -\kappa_\lambda(s)|_H^2 ds \leq \\
& \leq C \left ( L\int_0^t \left |p_\epsilon(s) -p_\lambda(s)\right |_{H}^2 ds + \epsilon + \lambda  \right ) + C \int_t^T \left |p_\epsilon(s) -p_\lambda(s)\right |_{2}^2 |X_\epsilon(s)|^2_H  ds+ \\
&\quad+ \|Dg_0\|_{Lip} |X_\epsilon(T)-X_\lambda(T)|^2_2+ \\
&\quad+C\int_t^T \left ( |X_\epsilon (s) - X_\lambda (s)|_H^{2} + \left |p_\epsilon (s) -p_\lambda (s) \right |_H^{2} \right ) T_{\epsilon,\lambda} (s) ds+\\
& \quad- \int_t^T \left  \langle \kappa_\epsilon(s) - \kappa_\lambda(s))\sqrt{Q} dW(s) , X_\epsilon(s) - X_\lambda(s) \right \rangle_H \, , \quad \forall \, t \in [0,T]\, .
\end{split}
\end{equation}

Exploiting thus the fact that the process $r \mapsto \int_t^r \left \langle (\kappa_\epsilon -\kappa_\lambda)\sqrt{Q} dW(s),X_\epsilon(s)-X_\lambda(s)\right \rangle_2$ is a local martingale on $[t,T]$, hence by the Burkholder-Davis-Gundy inequality, see, e.g., \cite[p.58]{DapE}, we have for all $r \in [t,T]$
\begin{equation}\label{EQN:BDG}
\begin{split}
& \E{\sup_{r \in [t,T]} \left |\int_t^r \left \langle (\kappa_\epsilon(s)-\kappa_\lambda(s)) \sqrt{Q} dW(s),X_\epsilon(s) -X_\lambda(s) \right \rangle_H \right  |}\leq \\
&\leq C\left (\E{\int_0^r |\kappa_\epsilon(s)-\kappa_\lambda(s)|^2_H |X_\epsilon(s) - X_\lambda(s)|_H^2 ds}\right )^\frac{1}{2} \leq \\
&\leq C \E{\sup_{s \in [t,r]}|X_\epsilon(s) -X_\lambda(s)|^2_H} + C \E{\int_t^r |\kappa_\epsilon (s) -\kappa_\lambda(s)|^2_H ds }\, .
\end{split}
\end{equation}

Taking then the expectation in and by \eqref{EQN:2.18b}, and using \eqref{EQN:BDG} we get
\begin{equation}\label{EQN:2.19b}
\begin{split}
&\E{\sup_{s \in [t,T]} \left (|X_\epsilon (s) -X_\lambda (s)|^2_H +  |p_\epsilon (s) -p_\lambda (s)|^2_H \right )} \\
&\quad+\E{ \int_0^T |X_\epsilon (s) -X_\lambda (s)|^2_{V} ds+ \int_t^T |p_\epsilon (s) -p_\lambda (s)|^2_{H} ds } \\
&\quad+ \E{\int_t^T |\kappa_\epsilon (s) - \kappa_\lambda (s)|_H^2 ds} \leq \\
&\leq \|Dg_0\|\E{|X_\epsilon(T)-X_\lambda(T)|^2_H}+ C \left ( L\E{\int_0^T |p_\epsilon(s) -p_\lambda(s)|^2_H\,ds}+\epsilon + \lambda\right )\\
&\quad+ C \E{\sup_{s \in [t,T]}|X_\epsilon(s) -X_\lambda(s)|_H^2} \\
&\quad+C  \E{ \int_t^T \left (|p_\epsilon (s) -p_\lambda (s)|^2_{H} + |X_\epsilon(s)-X_\lambda(s)|_H^2\right )\left (|X_\epsilon(s)|^2_H + T_{\epsilon,\lambda}(s) \right ) ds } \, . \\
%&\leq C\left (\epsilon + \lambda + \E{\int_0^t |X_\epsilon(s) - X_\lambda(s)|_H^2 ds} + \E{ \int_t^T |p_\epsilon (s) -p_\lambda (s)|^2_{H}\left (|X_\epsilon(s)|^2_H   + T_{\epsilon,\lambda} \right ) ds }\right ) \, .
\end{split}
\end{equation}
Taking into account estimates \eqref{EQN:2.16} and \eqref{EQN:2.17}, from \eqref{EQN:2.19b} we have 
\begin{equation}\label{EQN:2.19g}
\begin{split}
&\E{\sup_{s \in [t,T]} \left (|X_\epsilon (s) -X_\lambda (s)|^2_H +  |p_\epsilon (s) -p_\lambda (s)|^2_H \right )} \\
&\quad+\E{ \int_0^T |X_\epsilon (s) -X_\lambda (s)|^2_{V} ds+ \int_t^T |p_\epsilon (s) -p_\lambda (s)|^2_{H} ds } \\
&\quad+ \E{\int_t^T |\kappa_\epsilon (s) - \kappa_\lambda (s)|_H^2 ds} \leq \\
&\leq \tilde{C} \left ( L\E{\int_0^T |p_\epsilon(s) -p_\lambda(s)|^2_H\,ds}\right )  \\
&\quad+ \tilde{C} \left (\E{ \int_t^T |p_\epsilon (s) -p_\lambda (s)|^2_{H} \left (|X_\epsilon(s)|^3_H   + T_{\epsilon,\lambda}(s) \right ) ds }\right )\\
&\quad+ \tilde{C} \|D g_0\|_{Lip} \E{|X_\epsilon(T)-X_\lambda(T)|^2_H} + \tilde{C}(\epsilon + \lambda) \, . \\
%&\leq C\left (\epsilon + \lambda + \E{\int_0^t |X_\epsilon(s) - X_\lambda(s)|_H^2ds} + \E{ \int_t^T |p_\epsilon (s) -p_\lambda (s)|^2_{H}\left (|X_\epsilon(s)|^2_H   + T_{\epsilon,\lambda} \right ) ds }\right ) \, ,
\end{split}
\end{equation}
where $\tilde{C}$ is a positive constant independent of $\epsilon$ and $\lambda$. It follows that if $\tilde{C}(LT + \| D g_0 \|_{Lip})<1$, then, for any $t \in [0,T]$,
\begin{equation}\label{EQN:2.19h}
\begin{split}
&\E{\sup_{s \in [t,T]} \left (|X_\epsilon (s) -X_\lambda (s)|^2_H +  |p_\epsilon (s) -p_\lambda (s)|^2_H \right )} \\
&\quad+\E{ \int_0^T |X_\epsilon (s) -X_\lambda (s)|^2_{V } ds+ \int_t^T |p_\epsilon (s) -p_\lambda (s)|^2_{H} ds } \\
&\quad+ \E{\int_t^T |\kappa_\epsilon (s) - \kappa_\lambda (s)|_H^2 ds} \leq \\
&\leq C  \E{ \int_t^T |p_\epsilon (s) -p_\lambda (s)|^2_{H} \left (|X_\epsilon(s)|^2_H + T_{\epsilon,\lambda}(s) \right ) ds }+ C(\epsilon + \lambda) \, . 
\end{split}
\end{equation}
Let us define for $j \in \NN$ 
\[
\Omega_j := \left \{ \omega \in \Omega \, : \sup_{\epsilon}\sup_{t \in [0,T]} \left (|X_\epsilon(t)|_H^2 +|X_\epsilon(t)|_{V}^2+ |p_\epsilon(t)|^2_H \right ) dt \leq j \right \} \, ,
\]
then  estimates \eqref{EQN:2.10} implies that
\[
\mathbb{P} \left (\Omega_j\right ) \geq 1- \frac{C}{j}\, ,\quad  \forall \, j \in \mathbb{N}\,,
\]
for some constant $C$ independent of $\epsilon$.

If we set $ X^j_\epsilon := \Ind{\Omega_j} X_\epsilon $, $p^j_\epsilon := \Ind{\Omega_j} p_\epsilon$ and $\kappa^j_\epsilon := \Ind{\Omega_j} \kappa_\epsilon$, then %Clearly $X^j_\epsilon$, $p_\epsilon^j$ and $\kappa^j_\epsilon$ 
such quantities satisfy the system \eqref{EQN:2.6c}, with $\Ind{\Omega_j}\sqrt{Q} d W$. The latter means that estimate \eqref{EQN:2.19h} still holds in this context, so that we have

\begin{equation}\label{EQN:2.22}
\begin{split}
&\E{\sup_{s \in [t,T]} |X_\epsilon^j (s) -X_\lambda^j (s)|^2_H + \sup_{s \in [t,T]} |p_\epsilon^j (t) -p_\lambda^j (t)|^2_H } \\
&\quad+\E{ \int_t^T |p_\epsilon^j (s) -p_\lambda^j (s)|^2_{V} ds } + \E{\int_t^T |(\kappa_\epsilon (s) - \kappa_\lambda (s))\chi_j |_H^2 ds} \leq \\
&\leq C_j \int_t^T \E{ |p_\epsilon^j(s) - p_\lambda^j(s)|^2_H} ds+ C\left (\epsilon + \lambda\right )\, , \quad j \in \mathbb{N}\, .
\end{split}
\end{equation}
By Gronwall's lemma we get, for any $t \in [0,T]$
\begin{equation}\label{EQN:2.23}
\E{\sup_{s \in [t,T]} |X_\epsilon^j (s) -X_\lambda^j (s)|^2_H + \sup_{s \in [t,T]} |p_\epsilon^j (s) -p_\lambda^j (s)|^2_H } \leq C(\epsilon+\lambda)e^{C_j T}\,  ,
\end{equation}
hence, for $ \epsilon \to 0 $ and all $ j \in \mathbb{N} $ and all $t \in [0,T]$, we obtain 
\begin{equation}\label{EQN:2.24}
\begin{split}
&X^j_\epsilon \to X^j \quad \mbox{ in } \quad L^2\left ( \Omega_j ; L^2([0,T] \times \OO)\times L^2([0,T] \times \OO)\right )  \, ,\\
&p^j_\epsilon \to p^j \quad \mbox{ in } \quad L^2\left ( \Omega_j ; L^2([0,T] \times \OO)\times L^2([0,T] \times \OO)\right )\, .
\end{split}
\end{equation}

Therefore for each $ \omega \in \Omega $, we have that $ \left \{ X_\epsilon(t,\omega), p_\epsilon(t,\omega) \right \} $ are Cauchy sequences in $  L^2\left ([0,T] \times \OO \right ) $, with respect to $\epsilon$ and by estimates \eqref{EQN:2.10} and \eqref{EQN:2.11} it follows that taking related  subsequences, still denoted  by $\epsilon $, we have 
\begin{equation}\label{EQN:2.25}
\begin{aligned}
 &X_\epsilon \rightharpoonup X^* && \mbox{ in } L^2\left ([0,T] \times \Omega ; V\right ) \, ,\\
% &p_\epsilon \rightharpoonup p^* && \mbox{ in }  L^\infty \left ([0,T] ; L^2\left (\Omega \times \OO\right )\right ) \, ,\\
 &p_\epsilon \rightharpoonup p^* && \mbox{ in }  L^2\left ([0,T] \times \Omega \times \OO \times \OO \right ) \, ,\\
 &p_\epsilon \rightharpoonup p^* && \mbox{ in } L^2\left ([0,T] \times \Omega ; V\right ) \, ,\\
 &u_\epsilon \rightharpoonup u^* && \mbox{ in }  L^\infty \left ([0,T] ; L^2\left (\Omega \times U\right )\right ) \, ,\\
\end{aligned}
\end{equation}
where $\rightharpoonup$ means weak (respectively, weak-star) convergence, so we have for $ \epsilon \to 0 $
\begin{equation}\label{EQN:2.26}
X_\epsilon \to X^* \, ,\quad p_\epsilon \to p^* \, , a.e. \, \mbox{ in } [0,T] \times \Omega \times \OO \times \OO \, .
\end{equation}

We also have, since $ \left \{ I_{ion}\left (v_\epsilon\right )\right \} $ is bounded in $ L^{\frac{4}{3}} \left ([0,T]\times \Omega \times \OO \right )$, then it is weakly compact in $ L^1\left ([0,T]\times \Omega \times \OO \right ) $ and by \eqref{EQN:2.26} we have that for a subsequence $\{\epsilon\}\to 0$,
\[
I_{ion}\left (v_\epsilon\right ) \to I_{ion}(v^*) \,, \quad a.e. \, \mbox{ in } \, [0,T] \times \Omega \times \OO\, ,
\]
which implies that 
\begin{equation}\label{EQN:2.27}
I_{ion}\left (v_\epsilon\right ) \to I_{ion}(v^*) \, \quad \, \mbox{ in } \, L^1\left ([0,T] \times \Omega \times \OO\right ) \, .
\end{equation}
Then, letting $ \epsilon \to 0 $ we obtain 
\[
\begin{cases}
dX^*(t) =A X^*(t) dt+F(X^*(t)) dt +\sqrt{Q} dW(t)+ Bu^* (t)dt\,, t \in [0,T]\, ,\\
X^*(0) = x \, ,
\end{cases}
\, .
\]
Taking into account that $ \Psi $ is weakly lower semicontinuous in $ \U $ we infer by \eqref{EQN:2.4} that 
\[
\Psi(u^*) = \inf \left \{ \Psi(u); u \in \U \right \}\, ,
\]
therefore $ \left (X^*,u^*\right ) $ is optimal for the problem \eqref{EQN:P} and the proof of existence is therefore complete.
\end{proof}
Concerning the uniqueness for the optimal pair $(X^*,u^*)$ given by Th. \ref{THM:E!4}, we have that it follows by the same argument via the maximum principle result for problem \eqref{EQN:P}, namely one has the following result.

\begin{theorem}\label{THM:4.2}
Let $\left (X^*,u^*\right )$ be optimal in problem \eqref{EQN:P}, then
\begin{equation}\label{EQN:3.32a}
u^* =(\partial h)^{-1} (B^* p)\, , a.e. \: t \in [0,T]\;,
\end{equation}
where $p$ is the solution to the backward stochastic equation \eqref{EQN:3.7a}.
\end{theorem}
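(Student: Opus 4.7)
The plan is to derive the stated first-order necessary condition by the same variational route used in the proof of Theorem \ref{THM:E!4}, but applied directly to the optimal pair $(X^*,u^*)$ (there is no need for the Ekeland perturbation since $u^*$ is an exact minimizer). Concretely, I would fix an arbitrary $v\in\U$ and $\lambda>0$, compare $\Psi(u^*)\leq \Psi(u^*+\lambda v)$, divide by $\lambda$ and let $\lambda\downarrow 0$. Using the $C^1$ regularity of $g$ and $g_0$, the fact that $X^{u^*+\lambda v}=X^*+\lambda Z^v+o(\lambda)$ in $L^2_W(\Omega;C([0,T];H))$, and the definition of the directional derivative $h'$, this produces the variational inequality
\[
\E{\int_0^T \langle Dg(X^*(t)),Z^v(t)\rangle_H\,dt}
+\E{\int_0^T h'(u^*(t),v(t))\,dt}
+\E{\langle Dg_0(X^*(T)),Z^v(T)\rangle_H}\geq 0,
\]
for every $v\in\U$, where $Z^v$ solves the linearized system \eqref{EQN:3.12a} with $X_\epsilon$ replaced by $X^*$.

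Next I would introduce the adjoint state $(p,\kappa)$ as the unique solution of the backward equation \eqref{EQN:3.7a} evaluated along $X^*$ in place of $X_\epsilon$, whose well-posedness is guaranteed by the same references (Fuhrman--Tessitore) already cited in the existence proof. Applying the It\^o formula to $\langle p(t),Z^v(t)\rangle_H$ on $[0,T]$, using $Z^v(0)=0$ and $p(T)=-Dg_0(X^*(T))$, the stochastic integrals vanish in expectation (the needed square integrability coming from estimates analogous to \eqref{EQN:2.10}--\eqref{EQN:2.11}), and after cancellation of the $DF(X^*)$ terms one obtains the duality identity
\[
\E{\int_0^T \langle Dg(X^*(t)),Z^v(t)\rangle_H\,dt}
+\E{\langle Dg_0(X^*(T)),Z^v(T)\rangle_H}
=\E{\int_0^T \langle B^*p(t),v(t)\rangle_U\,dt}.
\]

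Substituting this into the variational inequality yields, for every $v\in\U$,
\[
\E{\int_0^T \bigl(h'(u^*(t),v(t))-\langle B^*p(t),v(t)\rangle_U\bigr)dt}\geq 0.
\]
Replacing $v$ by $-v$ if one wishes to isolate a subdifferential inclusion, this is exactly the statement that $B^*p\in\partial G(u^*)$ in the sense of the functional $G(u)=\E{\int_0^T h(u(t))dt}$ on $\U$. Since $h$ is convex, proper and lower-semicontinuous on the Hilbert space $U$, the subdifferential of $G$ acts pointwise, so that $B^*p(t)\in\partial h(u^*(t))$ for a.e.\ $t\in[0,T]$, $\mathbb{P}$-a.s. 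Inverting $\partial h$, which is single-valued and Lipschitz by hypothesis (ii), gives the required identity $u^*(t)=(\partial h)^{-1}(B^*p(t))$.

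The main obstacle I anticipate is the rigorous passage to the limit $\lambda\downarrow 0$ inside the functional $\Psi$: one needs uniform estimates for $X^{u^*+\lambda v}$ in $L^2_W(\Omega;C([0,T];H))\cap L^2_W(\Omega;L^2([0,T];V))$ and a convergence rate for $(X^{u^*+\lambda v}-X^*)/\lambda\to Z^v$, in order to dominate the difference quotients of $g(X^{u^*+\lambda v})$, $g_0(X^{u^*+\lambda v}(T))$ and $h(u^*+\lambda v)$. These estimates are obtained exactly as in \eqref{EQN:2.10}--\eqref{EQN:2.11} by the It\^o formula, exploiting the $m$-dissipativity of $A+F$ and the cubic growth of $I_{ion}$; the coercivity assumption $h(u)\geq\alpha_1|u|_U^2+\alpha_2$ from (ii) then delivers the integrability of $h'(u^*,v)$ needed to justify interchanging limit and expectation. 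Once this technical step is in place, the rest of the argument is a linearization-plus-duality routine and yields uniqueness of the optimal pair: two optimal controls would share the same adjoint system, hence coincide via \eqref{EQN:3.32a}, and the corresponding states would coincide by uniqueness for \eqref{EQN:FHNC}.
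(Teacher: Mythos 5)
Your proposal is correct and is essentially the paper's own argument: the paper proves Theorem \ref{THM:4.2} by repeating the variational step of Theorem \ref{THM:E!4} without the Ekeland term (yielding \eqref{EQN:21abc} with $Z^v$ solving \eqref{EQN:3.12a} along $X^*$) and then invoking the duality with the backward equation \eqref{EQN:3.7a} and the subdifferential characterization of $G$, exactly as you do. The only cosmetic discrepancy is the sign/direction of the first-order inequality (you write $\geq 0$, the paper writes $\leq 0$ following its convention in \eqref{EQN:21ab}), which does not affect the conclusion $u^*=(\partial h)^{-1}(B^*p)$.
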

\begin{proof}
%{\bf to check\\}
If $(X^*,u^*)$ is optimal for the problem \eqref{EQN:P}, then  by the same argument used to prove Th.  \ref{THM:E!4}, see \eqref{EQN:21ab}, we have
\begin{equation}\label{EQN:21abc}
\begin{split}
&\E{\int_0^T \left \langle Dg(X^*(t)),Z^v(t) \right \rangle_2 dt} + \E{\int_0^T h'(u^*(t),v(t))dt} \\
&+ \E{\left \langle Dg_0(X^*(T)),Z^v(T) \right \rangle_2}  \leq 0\, ,\quad \forall \, v \in \U\, ,
\end{split}
\end{equation}
where $Z^v$ is solution to equation \eqref{EQN:3.12a} with $X_\epsilon$  replaced by $X^*$.
This implies as above that \eqref{EQN:3.32a} holds.
\end{proof}

\begin{proof}[The uniqueness in \eqref{EQN:P}]
If $(X^*,u^*)$ is optimal in \eqref{EQN:P} then it satisfies systems \eqref{EQN:FHN}, \eqref{EQN:3.32a} and \eqref{EQN:21abc}, so that arguing as in the proof of Th. \ref{THM:E!4}, the same set of estimates implies that the previous  system has at most one solution if $LT+ \|D g_0 \|_{Lip}< C^*$, where $C^*$ is sufficiently small.
\end{proof}

\section{Conclusions}
In the present work we have derived the existence and uniqueness of the solution to the control problem associated to a FH-N system of equations perturbed by a Gaussian noise and with respect to a recovery variable. We would like to underline that the presented result has potential applications in medicine, particularly from the point of view of neuronal diseases care. Indeed, the scheme of equations we have studied is linked to the Bonhoeffer–van der Pol oscillator, namely a  nonlinear damping governed by a second-order differential equation that we are able to treat in presence of random (Gaussian) noise. The latter aspect is of great relevance in desincronize abnormal electrical activities that happen under the influence of pathologies as the Parkinson's one, or during epileptic attacks.
Possible generalizations of the proposed analysis will concern the study of the full Hodgkin-Huxley model, when a random source of noise has to be taken into consideration, as well as the study of the aforementioned models over networks of interconnected neurons, mainly following the approach derived in \cite{CordoniDPJMAA2017,CDP2017}. The latter are the subjects of our ongoing research.

\section*{Acknowledgement}
The authors wish to thank Prof. Viorel Barbu for his stimulating comments and enlightening suggestions. The authors would like also to thank the group \textit{Gruppo Nazionale per l'Analisi Matematica, la Probabilità e le loro Applicazioni} (GNAMPA) for the financial support that has founded the present research within the project \textit{Stochastic Partial Differential Equations and Stochastic Optimal Transport with Applications to Mathematical Finance}.

\newpage

\end{document}